\theoremstyle{plain}
\newtheorem{thm}{Theorem}[section]
\newtheorem{defn}[thm]{Definition}
\newtheorem{lemma}[thm]{Lemma}
\newtheorem{prop}[thm]{Proposition}
\newtheorem*{thm*}{Main Theorem}
\newtheorem*{coroll*}{Corollary}
\newtheorem*{prop*}{Proposition}
\theoremstyle{definition}
\newtheorem{example}[thm]{Example}
\newtheorem{remark}[thm]{Remark}
\newcommand{\Bun}{\text{Bun}}
\tikzset{
  symbol/.style={
    draw=none,
    every to/.append style={
      edge node={node [sloped, allow upside down, auto=false]{$#1$}}}
  }
}
\begin{document}
\title{\textbf{On automorphisms of semistable $G$-bundles with decorations}}
\author{Andres Fernandez Herrero}
\date{}
\maketitle
\begin{abstract}
    We prove a rigidity result for automorphisms of points of certain stacks admitting adequate moduli spaces. It encompasses as special cases variations of the moduli of $G$-bundles on a smooth projective curve for a reductive algebraic group $G$. For example, our result applies to the stack of semistable $G$-bundles, stacks of semistable Hitchin pairs, and stacks of semistable parabolic $G$-bundles. Similar arguments apply to Gieseker semistable $G$-bundles in higher dimensions. We present two applications of the main result. First, we show that in characteristic $0$ every stack of semistable decorated $G$-bundles admitting a quasiprojective good moduli space can be written naturally as a $G$-linearized global quotient $Y/G$, so the moduli problem can be interpreted as a GIT problem. Secondly, we give a proof that the stack of semistable meromorphic $G$-Higgs bundles on a family of curves is smooth over any base in characteristic $0$.
\end{abstract}
\begin{section}{Introduction}
Let $G$ be a reductive algebraic group over a field $k$. Several moduli problems of interest parametrize principal $G$-bundles on a projective variety $X$ along with some additional data. Examples of such data include Hitchin pair structures \cite[2.7.4]{schmitt-decorated} (generalizing principal Higgs bundles), sections of an associated vector bundle, and sections of projective fibrations (e.g. $G$-bundles with parabolic structure at finitely many points \cite{heinloth-schmitt}). These are all particular examples of what we refer to as ``$G$-bundles with decorations". 

In order to construct a moduli space of such objects, it is often necessary to impose certain semistability conditions which help to rigidify the moduli problem. In this article we study the automorphisms of a semistable $G$-bundle with decoration. Recall that there is an algebraic group consisting of those automorphisms of the $G$-bundle that preserve the decoration. It is reasonable to expect that semistability imposes some rigidity on this group. One way to approach this is to restrict any such automorphism to the fiber of the $G$-bundle over a fixed closed point in $X$. The expectation is that not much global information of the automorphism is lost via this restriction.

In this paper we prove that the kernel of restriction from the algebraic group of decoration-preserving automorphisms to any fiber is a finite group scheme with unipotent geometric points. This shows that if $\text{char}(k) = 0$, then the restriction to any fixed closed point on the variety is an immersion of algebraic groups, and therefore no information is lost by restriction.

Let us be more precise about our setup. Let $X$ be a geometrically reduced and geometrically connected projective scheme over $k$. We work with a quasiseparated algebraic stack $\mathcal{M}$ locally of finite type over $k$, and equipped with a representable morphism $\varphi: \mathcal{M} \to \text{Bun}_{G}(X)$ into the moduli stack $\text{Bun}_{G}(X)$ of $G$-bundles on $X$. Some of the properties of the semistable locus of a moduli problem can be encoded by requiring that $\mathcal{M}$ admits an adequate moduli space in the sense of \cite{alper_adequate}. Under this assumption, we prove in Theorem \ref{thm: main theorem} a precise version of the rigidity result outlined in the previous paragraph.

For any $k$-point $p \in \mathcal{M}(k)$, the algebraic group $\text{Aut}(p)$ of automorphisms of $p$ embeds into the group $\text{Aut}(\varphi(p))$ of automorphisms of the underlying $G$-bundle $\varphi(p)$. Given any fixed point $x \in X(k)$, we can restrict elements of $\text{Aut}(p)$ to the group of automorphisms $\text{Aut}(\varphi(p))|_{x})$ of the $x$-fiber of the underlying $G$-bundle. This defines a restriction homomorphism $\text{res}_x: \text{Aut}(p) \to \text{Aut}(\varphi(p)|_x)$ of algebraic groups.
\begin{thm*}[= Theorem \ref{thm: main theorem}]
With assumptions and notation as above, for any $p \in \mathcal{M}(k)$ and any $x \in X(k)$, the kernel of the restriction morphism $\mathrm{res}_x: \mathrm{Aut}(p) \to \mathrm{Aut}(\varphi(p)|_x)$ is a finite group scheme with unipotent geometric points. In particular $\mathrm{res}_x$ is a closed immersion if $\mathrm{char}(k) = 0$.
\end{thm*}
This in particular gives a necessary condition for a decorated $G$-bundle to be a point of a stack that admits an adequate moduli space. Our proof of the main theorem proceeds by degenerating to a closed point of the stack and then using semicontinuity results to obtain information about the kernels of restriction morphisms.

In the special case when $X$ is a smooth curve, we discuss several examples where Theorem \ref{thm: main theorem} applies directly. This includes the moduli of semistable $G$-bundles \cite{ramanathan-moduli-git, ramanathan-git1, glss.singular.char}, semistable Hitchin pairs \cite[2.7.4]{schmitt-decorated}, semistable parabolic $G$-bundles \cite{heinloth-schmitt}, and semistable holomorphic chains \cite{geometry-hol-chains}. More generally, the main theorem can be applied to any other moduli problem of (tuples of) $G$-bundles with decoration where there is a ``nice" \footnote{Meaning that the semistable locus admits an adequate moduli space.} semistability condition coming from either Mumford's GIT \cite{mumford-git} or $\Theta$-stability  \cite{halpernleistner2018structure, heinloth-hilbertmumford}.

We give two applications of the main theorem. First we explain how to functorially view any stack of decorated $G$-bundles admitting a good moduli space as a global quotient $Y/G$ in characteristic $0$ (Proposition \ref{prop: framed moduli spaces}). If the good moduli space is quasiprojective, then $Y$ is quasiprojective and admits an ample $G$-linearization, and so it follows that in this context any quasiprojective moduli space built using stack-theoretic techniques arises from a GIT problem. Secondly, we show that the moduli stack of semistable meromorphic $G$-Higgs bundles on a family of smooth curves in characteristic $0$ is smooth (Proposition \ref{prop: smoothness semistable meromorphic Higgs bundles}). This implies that the corresponding moduli space is flat in families, and the fibers have klt singularities.

We also explain how to apply the same techniques to Gieseker semistable $G$-bundles (in the sense of \cite{schmitt.singular} \cite{glss.singular.char} \cite{rho-sheaves-paper}) for higher dimensional $X$.

\textbf{Acknowledgements:} I would like to thank Mark Andrea de Cataldo, Tom\'as L. G\'omez, Daniel Halpern-Leistner and Nicolas Templier for helpful discussions. I would also like to thank anonymous referees for useful comments on the manuscript.
\end{section} 

\begin{section}{Some notation and setup}
We work over a field $k$ of arbitrary characteristic. Let $X$ be a geometrically reduced and geometrically connected projective scheme over $k$. Let $G$ be a reductive algebraic group over $k$. We denote by $\mathrm{Bun}_{G}(X)$ the moduli stack of $G$-bundles on $X$. This is an algebraic stack locally of finite type over $k$ and with affine diagonal, by applying \cite[Thm. 1.2]{hall-rydh-tannaka} with source $Z=X$ and target  the classifying stack $BG$.

Let $S$ be an affine $k$-scheme, and let $E$ be a $G$-bundle on $X \times S$ (we will be mainly interested in the case when $S = \mathrm{Spec}(k)$). For any $k$-scheme $Y$ equipped with a $G$-action, we denote by $E \times^{G} Y$ the associated fiber bundle over $X \times S$ with fibers isomorphic to $Y$. In the case when $Y= G$ equipped with the conjugation action, the associated fiber bundle $E \times^{G, \mathrm{Ad}} G$ is a reductive group scheme over $X \times S$, which we will denote by $\underline{Aut}(E)$. We denote by $\mathrm{Aut}(E)$ the contravariant functor from $S$-schemes into sets that sends an $S$-scheme $R$ into the set of sections $s: X \times R \to \underline{Aut}(E)\times_{S} R$ of the natural morphism $\underline{Aut}(E)\times_{S} R \to X \times R$. This functor is represented by an affine algebraic group over $S$, which is the automorphism group of $E$ when viewed as a $S$-point of the stack $\mathrm{Bun}_{G}(X)$.

For every $S$-point $x \in X(S)$, we denote by $\underline{Aut}(E)|_x \vcentcolon = \underline{Aut}(E) \times_{(X \times S,x)} S$ the fiber over $x$ of the group scheme $\underline{Aut}(E)$. We define a homomorphism $\mathrm{res}_x: \mathrm{Aut}(E) \to \underline{Aut}(E)|_x$ of group schemes at the level of functors by sending a section of $\underline{Aut}(E) \to X \times S$ to its restriction at $x$.

We will be interested in $G$-bundles in $\mathrm{Bun}_{G}(X)$ equipped with some type of decoration. One way to make this type of moduli problem precise is to consider a quasiseparated algebraic stack $\mathcal{M}$ that is locally of finite type over $k$, and is equipped with a representable morphism $\varphi: \mathcal{M} \to \mathrm{Bun}_{G}(X)$. In order to expect some rigidity properties for the automorphism groups of points, we need to impose some type of semistability assumptions on the ``$G$-bundles with decoration" that we are considering. This will be encoded by the existence of an adequate moduli space in the sense of \cite{alper_adequate}. We include the definition here for the sake of completeness.
\begin{defn}[\cite{alper_adequate}]
An algebraic stack $\mathcal{M}$ is said to admit an adequate moduli space if there is a morphism $\phi: \mathcal{M} \to Y$ to an algebraic space $Y$ such that the following properties are satisfied.
\begin{enumerate}[(1)]
    \item Let $p: U = \mathrm{Spec}(A) \to Y$ \'etale morphism, and let $\mathcal{A} \to \mathcal{B}$ be surjection of quasicoherent $\mathcal{O}_{\mathcal{M}}$-algebras. Then, for any section $t \in \Gamma(U, p^* \phi_*(\mathcal{B}))$, there exists some $N>0$ and a section $s \in \Gamma(U, p^*\phi_*(\mathcal{A}))$ such that $s$ maps to $t^N$.
    \item The natural morphism $\mathcal{O}_{Y} \to \phi_* \mathcal{O}_{\mathcal{M}}$ is an isomorphism.
\end{enumerate}
\end{defn}
Even though this notion provides a very convenient general setting for our theorem, we should stress that it is not necessary to understand the technicalities of adequate moduli spaces for our argument. The following are the main facts from \cite{alper_adequate} that the reader needs to know for our purposes.
\begin{enumerate}[(A)]
    \item If $\mathcal{M}$ admits an adequate moduli space, then the automorphism group of any closed geometric point of the stack $\mathcal{M}$ is geometrically reductive \cite[Prop. 9.3.4]{alper_adequate}. Moreover, an affine geometrically reductive group does not admit nontrivial normal connected smooth unipotent algebraic subgroups (such smooth unipotent subgroup is geometrically reductive by \cite[Thm. 9.4.1]{alper_adequate}, and so it is reductive by \cite[Thm. 9.7.5]{alper_adequate}, thus forcing it to be trivial).
    \item If the moduli problem parameterized by the points of $\mathcal{M}$ admits a moduli space construction via Mumford's GIT \cite{mumford-git}, then $\mathcal{M}$ admits an adequate moduli space \cite[Thm. 9.1.4]{alper_adequate}.
\end{enumerate}

We shall assume that our stack $\mathcal{M}$ admits an adequate moduli space. Let $S$ be a $k$-scheme. Since the morphism $\varphi: \mathcal{M} \to \mathrm{Bun}_{G}(X)$ is representable and $\mathcal{M}$ is quasiseparated, for any $S$-point $p \in \mathcal{M}(S)$ the natural homomorphism $\mathrm{Aut}(p) \to \mathrm{Aut}(\varphi(p))$ of automorphism group algebraic spaces is a monomorphism of finite type, and therefore it is separated and quasi-finite \cite[\href{https://stacks.math.columbia.edu/tag/0463}{Tag 0463}]{stacks-project}. Since $\mathrm{Aut}(\varphi(p))$ is a scheme, this implies that $\mathrm{Aut}(p)$ is actually a group scheme \cite[\href{https://stacks.math.columbia.edu/tag/03XX}{Tag 03XX}]{stacks-project}. For any $S$-point $x \in X(S)$, we will abuse notation and also denote by $\mathrm{res}_x$ the composition of group scheme homomorphisms $\mathrm{Aut}(p) \hookrightarrow \mathrm{Aut}(\varphi(p)) \xrightarrow{\mathrm{res}_x} \underline{Aut}(\varphi(p))|_x$.
\end{section}
\begin{section}{The main theorem}
For our main theorem, recall that $k$ is an arbitrary field, $X$ is a geometrically reduced and geometrically connected projective scheme over $k$, and $G$ is a reductive group over $k$.
\begin{thm} \label{thm: main theorem}
Let $\mathcal{M}$ be a quasiseparated finite type algebraic stack admitting an adequate moduli space, and equipped with a representable morphism $\varphi: \mathcal{M} \to \mathrm{Bun}_{G}(X)$. Let $p \in \mathcal{M}(k)$. Then, for all $x \in X(k)$, the kernel of $\mathrm{res}_x: \mathrm{Aut}(p) \to \underline{Aut}(\varphi(p))|_x$ is a finite group scheme with unipotent geometric points. In particular $\mathrm{res}_x$ is a closed immersion if $\mathrm{char}(k) = 0$.
\end{thm}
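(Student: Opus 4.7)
The plan is to combine a key lemma that rules out multiplicative subgroups of the kernel (valid for any $p$) with a specialization argument reducing the theorem to the case of closed points, where the geometric reductivity of $\text{Aut}(p_0)$ (Fact (A) in the setup) can be brought to bear. The key lemma asserts that for every $p \in \mathfrak{X}(k)$ the kernel $K := \ker(\text{res}_x)$, already taken in the larger group $\text{Aut}(\varphi(p)) = \Gamma(X, \underline{Aut}(\varphi(p)))$, contains no nontrivial subgroup scheme of multiplicative type. Indeed, any such subgroup $M$ determines a family $\rho_y \colon M \to \underline{Aut}(\varphi(p))|_y$ of homomorphisms into a reductive group, parametrized by $y \in X$. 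By the SGA3 rigidity of conjugacy classes of multiplicative-type homomorphisms into reductive group schemes, the conjugacy class of $\rho_y$ is locally constant in $y$; geometric integrality of $X$ makes it constant; triviality at $y=x$ then forces $\rho_y$ to be trivial for every $y$; and separatedness of $\underline{Aut}(\varphi(p)) \to X$ yields $M = 1$.

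When $p = p_0$ is a closed point, I would deduce finiteness and unipotency of $K_0 := \ker(\text{res}_x)$ as follows. Since $\text{Aut}(p_0)$ is geometrically reductive (Fact (A)), and after passing to $\bar{k}$ if needed, the reduced identity component $(K_0^0)_{\mathrm{red}}$ is a smooth connected subgroup of $\text{Aut}(p_0)_{\bar{k}}$, normal in it as a characteristic subgroup of the normal subgroup $K_0$. The key lemma rules out nontrivial tori in $(K_0^0)_{\mathrm{red}}$, so it is unipotent; Fact (A) then forces it to be trivial. Hence $K_0^0$ is infinitesimal, $K_0$ is finite, and the Jordan decomposition of any geometric point of $K_0$ combined with the key lemma shows that every such point is unipotent.

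For arbitrary $p$, I would produce a degeneration to a closed point through the structure theory of stacks admitting adequate moduli spaces (Halpern-Leistner and Alper--Halpern-Leistner--Heinloth): this yields a morphism $f \colon \Theta = [\mathbb{A}^1/\mathbb{G}_m] \to \mathfrak{X}$ with $f(1) = p$ and $f(0) = p_0$ a closed point in the same fiber of the adequate moduli space. The pullback along $\mathbb{A}^1 \to \Theta$ of the kernel of restriction at $x$ is a family $\mathcal{K} \to \mathbb{A}^1$ of group schemes, $\mathbb{G}_m$-equivariant over $\mathbb{G}_m \subset \mathbb{A}^1$, so that $\mathcal{K}_t \cong K_p$ for every $t \neq 0$. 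Upper semicontinuity of fiber dimension then gives $\dim K_p = \dim \mathcal{K}_1 \le \dim \mathcal{K}_0 = \dim K_{p_0} = 0$, forcing $K_p$ to be finite; the key lemma then rules out semisimple elements, giving the unipotency of geometric points.

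The main obstacle will be arranging the degeneration to a closed point in a form making the semicontinuity step apply cleanly to the specific $k$-point $p$, as opposed to some DVR-generic surrogate --- this is precisely where the $\mathbb{G}_m$-equivariance of the $\Theta$-framework is essential, since it forces $\mathcal{K}_t$ to be independent (up to isomorphism) of $t \in \mathbb{G}_m$ and hence allows upper semicontinuity to directly compare $K_p$ and $K_{p_0}$. A secondary subtlety to verify is the SGA3 rigidity input in positive characteristic, including for infinitesimal multiplicative groups like $\mu_p$; this is available in the required generality but must be invoked carefully.
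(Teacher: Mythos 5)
Your proposal is correct in outline and shares the paper's overall architecture --- (a) a ``rigidity along $X$'' lemma exploiting properness/connectedness of $X$ together with triviality at the single point $x$, (b) a degeneration to a closed point of $\mathfrak{X}$ plus upper semicontinuity of the fiber dimension of the kernel group scheme, and (c) geometric reductivity at the closed point to kill the smooth connected part of the kernel --- but you implement (a) and (b) genuinely differently. For (a), the paper proves directly that every $k$-point $g$ of $K_x$ is unipotent: the induced map $X \to G/\!/G$ is constant because $X$ is proper, geometrically integral and $G/\!/G$ is affine, so $\mathrm{res}_y(g)$ lies over the image of the identity for all $y$, whence Steinberg's criterion gives fiberwise unipotence, and a faithful representation plus reducedness of $X$ upgrades this to unipotence of $g$ itself. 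Your version instead rules out nontrivial multiplicative-type subgroup schemes of $K_x$ via rigidity of such subgroups in the reductive group scheme $\underline{Aut}(\varphi(p))$ over $X$; this is also valid (and if you want to avoid the delicate conjugacy-class formulation for infinitesimal groups like $\mu_p$, the cleanest route is the weight decomposition of the associated vector bundle $\varphi(p)\times^G k^n$ under $M$, whose weight-subbundle ranks are locally constant on $X$ and concentrated in weight $0$ at $x$). Your lemma buys unipotence of geometric points only indirectly, via Jordan decomposition, whereas the paper's lemma is itself the unipotence statement in the theorem; conversely your lemma immediately shows a smooth connected kernel contains no torus, while the paper must invoke the standard fact that a smooth connected group with all $k$-points unipotent is unipotent. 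For (b), the paper deliberately avoids the $\Theta$-degeneration machinery and instead takes the scheme-theoretic closure of $p$, a smooth affine integral chart $U$ containing a point over a closed point of $\mathfrak{X}$ and a dense open $V$ lying over $p$, and applies semicontinuity over $U$; your $\Theta$-degeneration from the Alper--Halpern-Leistner--Heinloth existence theory achieves the same comparison $\dim K_p \le \dim K_{p_0}$ more transparently (the $\mathbb{G}_m$-equivariance makes the generic fibers literally constant), but it is a heavier input whose validity for \emph{adequate} (rather than good) moduli spaces in positive characteristic you must cite carefully; the paper only invokes that tool in its characteristic-zero Gieseker section. Neither difference is a gap --- both of your substitutions can be made rigorous --- but the two subtleties you yourself flag (rigidity for infinitesimal multiplicative groups, and the exact hypotheses of the $\Theta$-degeneration lemma) are precisely the points where the paper's more elementary choices pay off.
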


\begin{remark}
    For the proof of \Cref{thm: main theorem}, we are actually only using the following geometric properties of the stack $\mathcal{M} \to \Bun_{G}(X)$:
    \begin{enumerate}[(1)]
        \item Every point of $\mathcal{M}$ specializes to a closed point.
        \item The stabilizer of every closed point is a reductive group.
    \end{enumerate}
    For (1) we just need to know that $\mathcal{M}$ is quasicompact and quasiseparated, because then the topological space of $\mathcal{M}$ is spectral \cite[\href{https://stacks.math.columbia.edu/tag/0DQN}{Tag 0DQN}]{stacks-project}. On the other hand (2) is implied by $\mathrm S$-completeness of the stack $\mathcal{M}$ \cite[Prop. 3.47]{alper2019existence}. In particular \Cref{thm: main theorem} provides a necessary condition for the $\mathrm S$-completeness of a quasiseparated finite type stack $\mathcal{M}$ with a representable morphism $\mathcal{M} \to \Bun_{G}(X)$.
\end{remark}

In order to prove Theorem \ref{thm: main theorem}, we will, without loss of generality, pass to the algebraic closure of $k$ and assume that $k$ is algebraically closed for the rest of this section. Let us denote by $K_x$ the kernel of the homomorphism of group schemes $\mathrm{res}_x$. Our goal is to show that $K_x$ has unipotent $k$-points and is finite over $k$. We start by proving the former.
\begin{lemma} \label{lemma: unipotent points}
Any $k$-point of the algebraic group $K_x$ is unipotent.
\end{lemma}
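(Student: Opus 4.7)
My plan is to reduce, via Jordan--Chevalley decomposition, to proving that the only semisimple element of $K_x(k)$ is the identity. Because $\text{res}_x$ is a homomorphism of affine $k$-group schemes, Jordan decomposition is preserved; applied to the trivial decomposition $e = e \cdot e$, uniqueness shows that if $g = g_s g_u \in K_x(k)$ then both $g_s, g_u$ already lie in $K_x$. The same reasoning applied to the monomorphism $\text{Aut}(p) \hookrightarrow \text{Aut}(\varphi(p))$ lets me work directly inside the full automorphism group $\text{Aut}(E)$ of the $G$-bundle $E := \varphi(p)$, rather than in the potentially smaller group $\text{Aut}(p)$.

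To exploit the linear structure, I would fix a faithful representation $\rho \colon G \hookrightarrow \text{GL}(V)$ and form the associated vector bundle $E_V := E \times^G V$. Pushforward along $\rho$ gives a closed immersion of $X$-group schemes $\underline{Aut}(E) \hookrightarrow \underline{Aut}(E_V)$, hence a monomorphism on global sections that is compatible with Jordan decomposition and with restriction at $x$. So it is enough to show that any semisimple $h \in \text{Aut}(E_V)(k)$ with $h|_x = \text{id}$ equals the identity on $E_V$. For this, the Zariski closure $H := \overline{\langle h \rangle} \subseteq \text{Aut}(E_V)$ is a diagonalizable $k$-group, and its tautological action on $E_V$ produces a canonical weight decomposition $E_V = \bigoplus_\chi (E_V)_\chi$ as a direct sum of $H$-stable sub-bundles indexed by the characters $\chi$ of $H$. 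Since $X$ is geometrically integral, each summand has globally constant rank, so every nonzero $(E_V)_\chi$ has a nonzero fiber at $x$. Because $h$ acts on $(E_V)_\chi$ as the scalar $\chi(h) \in k^{\times}$ and $h|_x = \text{id}$, every $\chi$ that actually appears must satisfy $\chi(h) = 1$, and therefore $h$ acts trivially on all of $E_V$.

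The step that will require most care is justifying the weight decomposition as a direct sum of sub-bundles of constant rank; this is standard for diagonalizable group actions on locally free sheaves over a connected base in any characteristic, and it is precisely the ingredient that promotes the fiberwise statement ``eigenvalues of $h$ at $x$ equal $1$'' to the global statement ``all $H$-weights appearing in $E_V$ are trivial''. Notably, this argument uses neither the adequate moduli space assumption nor representability of $\varphi$ in any essential way; those hypotheses presumably enter only in the subsequent step establishing the finiteness of $K_x$.
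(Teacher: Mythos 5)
Your proof is correct, but it takes a genuinely different route from the paper's. The paper composes $g$, viewed as a section $X \to \varphi(p)\times^{G,\text{Ad}}G$, with the adjoint quotient $G \to G/\!/G$: properness and geometric integrality of $X$ force the resulting map $X \to G/\!/G$ to be constant, hence (since $g|_x = e$) equal to the image of the identity, so Steinberg's theorem gives that $\text{res}_y(g)$ is unipotent in every fiber; this fiberwise statement is then globalized by noting that the section $(\psi(g)-\text{Id}_{\mathcal V})^n$ of $\text{End}(\mathcal V)$ vanishes at every point of the reduced scheme $X$. You instead use the Jordan decomposition to reduce to killing the semisimple part, and then decompose $E_V$ into weight sub-bundles for the diagonalizable group $H = \overline{\langle h\rangle}$; constancy of the ranks of these summands over the connected $X$ propagates triviality at the single fiber over $x$ to all of $E_V$. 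Your route avoids the adjoint quotient and Steinberg's result entirely and is uniform in all characteristics; its one nontrivial input is the standard fact (SGA3, Exp.\ I) that the weight spaces of a diagonalizable group acting $\mathcal{O}_X$-linearly on a locally free sheaf are locally free direct summands, which is exactly what upgrades ``trivial at $x$'' to ``trivial everywhere''. Do make explicit that properness of $X$ still enters your argument, through the finite-dimensionality of $\text{End}(E_V)$: this is what makes $\text{Aut}(E_V)$ a linear algebraic group over $k$, so that the Jordan decomposition and the Zariski closure $H$ (with its character group) exist at all --- over a non-proper base the statement fails, e.g.\ $\text{diag}(1,t)$ acting on $\mathcal{O}^{\oplus 2}$ over $\mathbb{G}_m$ restricts to the identity at $t=1$ and is fiberwise diagonalizable, yet is not the identity. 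Your closing observation is consistent with the paper: the adequate moduli space hypothesis and the representability of $\varphi$ are used only for the finiteness of $K_x$ in Theorem \ref{thm: main theorem}, not in this lemma.
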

\begin{proof}
We shall denote by $E$ the $G$-bundle on $X$ corresponding to $\varphi(p)$. Let $g$ be a $k$-point of $K_x$. There are natural immersions of algebraic groups $K_x \hookrightarrow \mathrm{Aut}(p) \hookrightarrow \mathrm{Aut}(E)$. We can without loss of generality replace $g$ with its image in the algebraic group $\mathrm{Aut}(E)$ of automorphisms of the $G$-bundle $E$. We shall think of $g$ as a section
\[ g: X \to \underline{Aut}(E) = E \times^{G, \mathrm{Ad}} G\]
Recall that the group $G$ acts on itself via conjugation; let us denote by $k[G]^G$ the subring of invariants of the coordinate ring of $G$. We set $G/\!/G \vcentcolon = \mathrm{Spec}(k[G]^G)$. The inclusion $k[G]^G \subset k[G]$ induces a $G$-equivariant morphism $G \to G/\!/G$, where $G$ acts on the right-hand side via the trivial action. This induces a morphism of fiber bundles $E \times^{G, \mathrm{Ad}}G \to E \times^{G} (G/\!/G) = X \times (G/\!/G)$. Let us consider the composition of morphisms of $X$-schemes
\[X \xrightarrow{g} E \times^{G, \mathrm{Ad}} G \to X \times (G/\!/G) \]
This composition amounts to a morphism $f: X \to G/\!/G$. Since $X$ is proper, geometrically reduced and geometrically connected, we have $H^0(X, \mathcal{O}_X) = k$ \cite[\href{https://stacks.math.columbia.edu/tag/0BUG}{Tag 0BUG}]{stacks-project}. Using that $G/\!/G$ is affine, this implies that the morphism $f$ must factor through a point $X \to \mathrm{Spec}(k) \to G/\!/G$. In order to determine this value, we can look at the image of any point on $X$. But, since by assumption $g \in K_x(k)$, the image of $x$ in $G/\!/G$ agrees with the image of the identity of $G$. Therefore we see that for any $y \in X(k)$ the image of $\mathrm{res}_y(g)$ under the natural morphism $\underline{Aut}(E)|_{y} \to G/\!/G$ agrees with the image of the identity. 

Note that, since $k$ is algebraically closed, $\underline{Aut}(E)|_{y}$ is (noncanonically, up to conjugation) isomorphic to $G$. Since the image of $\mathrm{res}_y(g)$ in $G/\!/G$ is the same as the image of the identity $1_G$, it satisfies that $f(\text{res}_y(g)) = f(1_G)$ for all $f \in k[G]^G$, and so $\mathrm{res}_y(g)$ is unipotent for all $y$ by \cite[Cor. 6.7]{steinberg-regular}. We shall show that this implies that $g \in \mathrm{Aut}(E)(k)$ is itself unipotent. Fix a faithful representation $G \hookrightarrow \mathrm{GL}_n$ of the linear algebraic group $G$. We will denote by $\mathcal{V}$ the associated $\mathrm{GL}_n$-bundle $E\times^G\mathrm{GL}_n$, which we can also view as a vector bundle of rank $n$ on $X$. There is a closed immersion of group schemes $E \times^{G, \mathrm{Ad}} G \hookrightarrow \mathcal{V} \times^{\mathrm{GL}_n, \mathrm{Ad}} \mathrm{GL}_n$, which induces an immersion of algebraic groups $\psi: \mathrm{Aut}(E) \hookrightarrow \mathrm{Aut}(\mathcal{V})$. Since the restriction of $g$ to each fiber $E|_y$ is unipotent, it follows that the restriction of its image $\psi(g)$ to each fiber is unipotent. This implies that the endomorphism $(\psi(g)- \mathrm{Id}_{\mathcal{V}})^n$ vanishes on every fiber, and so $(\psi(g)- \mathrm{Id}_{\mathcal{V}})^n \in \mathrm{End}(\mathcal{V})$ is identically $0$, since $X$ is reduced. Note that $\mathrm{Aut}(\mathcal{V})$ acts faithfully via multiplication on the finite $k$-vector space $\mathrm{End}(\mathcal{V})$. The equation $(\psi(g)- \mathrm{Id}_{\mathcal{V}})^n=0$ shows that $g$ acts unipotently on $\mathrm{End}(\mathcal{V})$ and thus we conclude that $g$ is indeed a unipotent element.
\end{proof}

\begin{proof}[Proof of Theorem \ref{thm: main theorem}]
After Lemma \ref{lemma: unipotent points}, we are only left to show that $K_x$ is finite. For this it suffices to prove that the dimension of $K_x$ is $0$, because $K_x$ is of finite type. We will show $\mathrm{dim}(K_x)=0$ by degenerating to a closed point of the stack $\mathcal{M}$. 

Let $\mathfrak{Z}$ denote the scheme-theoretic closure of the morphism $p: \mathrm{Spec}(k) \to \mathcal{M}$ \cite[\href{https://stacks.math.columbia.edu/tag/0CMH}{Tag 0CMH}]{stacks-project}. We have that $\mathfrak{Z}$ is of finite type and quasiseparated over $k$, and so it contains a closed point $z$ by \cite[\href{https://stacks.math.columbia.edu/tag/0DQN}{Tag 0DQN}]{stacks-project}. Let $U \to \mathfrak{Z}$ be a neighborhood of $z$ in the smooth topology, with $U$ a scheme of finite type over $k$. By construction, there exists a dense open subset of $U$ whose points map to $p$ under the composition $U \to \mathfrak{Z} \to \mathcal{M}$. Choose a closed point $u \in U(k)$ that maps to $z$. After further replacing $U$ with an affine neighborhood of $u$ inside an irreducible component, we obtain an affine integral scheme of finite type $U$ with a morphism $\widetilde{p}: U \to \mathcal{M}$ such that every point in a dense open subset $V \subset U$ is sent to $p$, and at least one point $u \in U(k)$ is sent to the closed point $z$ of $\mathcal{M}$. This is the degeneration we will use.

Consider the group scheme $\mathrm{Aut}(\widetilde{p}) \to U$. By construction, we know that the restriction of $\mathrm{Aut}(\widetilde{p})$ to every point in $V \subset U$ is isomorphic to the original automorphism group $\mathrm{Aut}(p)$. On the other hand, since $u$ maps a closed point of the stack $\mathcal{M}$ that admits an adequate moduli space, $\mathrm{Aut}(\widetilde{p})|_u$ is geometrically reductive.

For any $x \in X(k)$ we can consider the restriction morphism of $U$-group spaces $\mathrm{res}_x: \mathrm{Aut}(\widetilde{p}) \to \mathrm{Aut}(\varphi(\widetilde{p})) \to \underline{Aut}(\varphi(\widetilde{p}))|_{x \times U}$, where we define everything relative to $U$ using the section $x \times U \to X \times U$. We denote by $\widetilde{K}_x$ the group space of finite type over $U$ given by the kernel of $\mathrm{res}_x$. By definition the fibers of $\widetilde{K}_x$ over the open dense $V \subset U$ recover our original kernel $K_x$.

We would like to show that the constant fiber dimension $\mathrm{dim}(K_x)$ of $\widetilde{K_x}$ over $V \subset U$ is equal to $0$. By upper semicontinuity on the base $U$ for the dimension of the fibers of the group scheme $\widetilde{K}_x$ (\cite[Prop 4.1]{sga3} and \cite[Cor. 2.4.1]{sga3}), it suffices to show that the fiber dimension over the closed point $u$ is $0$.


In summary, we are allowed to replace the point $p$ with its degeneration $u$. In particular, we can assume without loss of generality that $p$ is a closed point of the stack $\mathcal{M}$, and so $\mathrm{Aut}(p)$ is geometrically reductive. The kernel $K_x \subset \mathrm{Aut}(p)$ is a normal subgroup scheme of $\mathrm{Aut}(p)$. The (smooth) reduced subgroup of the neutral component $\mathrm{Red}(K_x)^0 \subset (K_x)^0$ is also normal in $\mathrm{Aut}(p)$. Since every $k$-point of $\mathrm{Red}(K_x)^0$ is unipotent, we conclude that $\mathrm{Red}(K_x)^0$ is a unipotent algebraic group. Since $\mathrm{Aut}(p)$ is an affine geometrically reductive group scheme, it does not admit nontrivial smooth connected normal unipotent subgroups (cf. Fact (A) above). Therefore, $\mathrm{Red}(K_x)^0$ is trivial. We conclude that $K_x$ is finite, and so its dimension is $0$ as desired.

Finally, if the characteristic of $k$ is $0$, then the group $K_x$ is smooth with unipotent geometric points. In this case any non-identity geometric point $g$ of $K_x$ has infinite order, contradicting the finiteness of $K_x$. It follows that $K_x$ must the the trivial algebraic group, and hence $\mathrm{res}_x$ is a closed immersion.

\end{proof}

\begin{remark} \label{remark: general base}
The same argument applies verbatim in families. Suppose that $S$ is a Noetherian $k$-scheme, $X \to S$ is a flat projective morphism with connected and reduced geometric fibers, and $G \to S$ is a reductive group scheme. There is a relative stack $\Bun_{G}(X/S) \to S$ that classifies $G$-bundles on the fibers of the morphism $X \to S$. By \cite[Thm. 1.2]{hall-rydh-tannaka}, $\Bun_{G}(X/S) = \mathrm{Map}_S(X, S/G_S)$ is algebraic and locally of finite type over $S$. Let $\mathcal{M} \to \Bun_{G}(X/S)$ be a representable morphism, where $\mathcal{M}$ is quasiseparated and of finite type over $S$. If $\mathcal{M}$ admits an adequate moduli space, then the same rigidity statement as in theorem \ref{thm: main theorem} applies for the automorphisms of any field-valued point of $\mathcal{M}$.
\end{remark}
\end{section}
\begin{section}{Examples over a curve}
We will focus on the case when $X=C$ is a smooth projective curve. First, in order to fix ideas, we present some simple examples of automorphism groups and restriction morphisms.
\begin{example}
Let $C$ be an elliptic curve over $k$, and set $G = \mathrm{GL}_2$. The stack $\Bun_{\mathrm{GL}_2}(C)$ parametrizes rank 2 vector bundles on $C$. Recall that a vector bundle $\mathcal{E}$ of rank 2 on $C$ is called semistable if for all line subbundles $\mathcal{L} \subset \mathcal{E}$ we have the inequality of degrees $\mathrm{deg}(\mathcal{L}) \leq \frac{1}{2} \mathrm{deg}(E)$. Let $p, x \in C(k)$ be two $k$-points of the curve. Let us describe some instances of the restriction morphism.
\begin{enumerate}[(1)]
    \item Since $H^1(C, \mathcal{O}_C) = k$, there is a unique nonsplit extension up to isomorphism
    \[ 0 \to \mathcal{O}_C \to \mathcal{E} \to \mathcal{O}_C \to 0\]
    The vector bundle $\mathcal{E}$ is semistable. Any point of the automorphism group scheme $\mathrm{Aut}(\mathcal{E})$ preserves the subsheaf $\mathcal{O}_C \subset \mathcal{E}$. Therefore we can define a natural homomorphism of group schemes $\mathrm{Aut}(\mathcal{E}) \to \mathrm{Aut}(\mathcal{O}_C) \cong \mathbb{G}_m$ given by restricting the action to $\mathcal{O}_C \subset \mathcal{E}$. This homomorphism admits a splitting given by the scaling action of $\mathbb{G}_m$ on $\mathcal{E}$. The kernel of $\mathrm{Aut}(\mathcal{E}) \to \mathbb{G}_m$ is isomorphic to $\mathrm{Hom}(\mathcal{O}_C, \mathcal{E}) \cong \mathrm{Hom}(\mathcal{O}_C, \mathcal{O}_C) \cong \mathbb{G}_a$. This induces an identification $\mathrm{Aut}(\mathcal{E}) \cong \mathbb{G}_m \times \mathbb{G}_a$, which we can use to embed $\mathrm{Aut}(\mathcal{E})$ into $\mathrm{GL}_2$
    \[ i: \mathrm{Aut}(\mathcal{E}) \cong \mathbb{G}_m \times \mathbb{G}_a \hookrightarrow \mathrm{GL}_2, \; \; \; \; \; \; \; (t,a) \mapsto \begin{bmatrix} t & a\\ 0 & t\end{bmatrix}\]
    The restriction of the short exact sequence $0 \to \mathcal{O}_C \to \mathcal{E} \to \mathcal{O}_C \to 0$ to the point $x$ splits, as it is a short exact sequence of $k$-vector spaces. Any choice of splitting induces a trivialization of the fiber $\mathcal{E}_x$, and therefore an isomorphism $\mathrm{Aut}(\mathcal{E}_x) \cong \mathrm{GL}_2$. Under this isomorphism, the restriction morphism $\mathrm{res}_x$ is identified with the inclusion $i: \mathrm{Aut}(\mathcal{E}) \to \mathrm{GL}_2 \cong \mathrm{Aut}(\mathcal{E}_x)$ (this turns out to be independent of the choice of splitting at $x$).
    
    \item Consider a nontrivial extension
    \[ 0 \to \mathcal{O}_C \to \mathcal{E} \to \mathcal{O}_{C}(p) \to 0\]
    The vector bundle $\mathcal{E}$ geometrically stable, hence it is semistable and its group of automorphisms $\mathrm{Aut}(E)$ is the multiplicative group $\mathbb{G}_m$ given by scaling by the constants. The restriction morphism $\mathrm{res}_x: \mathrm{Aut}(\mathcal{E}) \hookrightarrow \mathrm{Aut}(\mathcal{E}_x)$ is the natural inclusion of the subgroup of scalars $\mathbb{G}_m$ inside $\mathrm{Aut}(\mathcal{E}_x)$.
    
    \item Consider the direct sum $\mathcal{E} = \mathcal{O}_C \oplus \mathcal{O}_C(p)$. In this case $\mathcal{E}$ is not semistable. The automorphisms of $\mathcal{E}$ are the group of upper triangular matrices
    \[ \mathrm{Aut}(\mathcal{E}) = \begin{bmatrix} \mathrm{Aut}(\mathcal{O}_C) & \mathrm{Hom}(\mathcal{O}_C, \mathcal{O}_C(p)) \\ 0 & \mathrm{Aut}(\mathcal{O}_C(p)) \end{bmatrix} =  \begin{bmatrix} \mathbb{G}_m & \mathbb{G}_a \\ 0 & \mathbb{G}_m \end{bmatrix}\]
    For any $x$, the direct sum decomposition induces a trivialization $\mathcal{E}_x \cong k \oplus k$, and therefore an identification $\mathrm{Aut}(\mathcal{E}_x) \cong \mathrm{GL}_2$. If $x \neq p$, then the restriction morphism $\mathrm{res}_x: \mathrm{Aut}(\mathcal{E}) \to \mathrm{Aut}(\mathcal{E}_x) \cong \mathrm{GL}_2$ is the closed immersion of the subgroup upper triangular matrices inside $\mathrm{GL}_2$. However, if $x =p$, then the restriction morphism is not an immersion, and is given by
    \[\mathrm{res}_x: \begin{bmatrix} \mathbb{G}_m & \mathbb{G}_a \\ 0 & \mathbb{G}_m \end{bmatrix} \to \mathrm{GL}_2, \; \; \; \; \begin{bmatrix} a & b \\ 0 & c \end{bmatrix} \mapsto \begin{bmatrix} a & 0 \\ 0 & c \end{bmatrix}\]
\end{enumerate}
\end{example}

We shall now present some examples to which the general result in Theorem \ref{thm: main theorem} applies. 
\begin{example}[Semistable $G$-bundles on $C$]
Ramanathan \cite{ramanathan-stable} defined a notion of semistability for $G$-bundles, which determines an open substack $\mathcal{M}$ of $\mathrm{Bun}_{G}(C)$. A $G$-bundle $E$ is called semistable if for all reductions of structure group $E_P$ to a parabolic subgroup $P \subset G$ and all $P$-dominant characters $\chi: P \to \mathbb{G}_m$, the associated line bundle $E_{P} \times^{P} \chi$ on $C$ has nonpositive degree. Moreover, Ramanathan constructed a moduli space for $\mathcal{M}$ using GIT when $G$ is connected reductive \cite{ramanathan-moduli-git} (see also \cite[Cor. 5.5.3]{glss.large} in the case of positive characteristic). We can apply Theorem \ref{thm: main theorem} to the open substack of semistable $G$-bundles of fixed topological type, which is of finite type, to obtain the rigidity result in this context.

We note that in this case the result in characteristic $0$ also follows from a cohomology computation without the need to degenerate to a polystable object. Indeed, by deformation theory we have $\text{Lie}(\mathrm{Aut}(E)) = H^0(C, E \times^{G, \mathrm{Ad}} \mathfrak{g}))$, where $\mathfrak{g}$ is the Lie algebra of $G$ equipped with its adjoint action. The restriction $\text{Lie}(\text{res}_x): \text{Lie}(\mathrm{Aut}(E)) \to \text{Lie}(\mathrm{Aut}(E|_x))$ is identified with the restriction morphism on global sections
\[ H^0(C, (E \times^{G, \mathrm{Ad}} \mathfrak{g})) \to H^0(C, (E \times^{G, \mathrm{Ad}} \mathfrak{g})|_x)\]
Therefore the Lie algebra of the kernel $K_x$ is given in this case by the kernel of the above morphism
\[H^0(C, (E \times^{G, \mathrm{Ad}} \mathfrak{g})\otimes \mathcal{O}_C(-x)) = \mathrm{Hom}(\mathcal{O}_C(x), \, E \times^{G, \mathrm{Ad}} \mathfrak{g})\]
By \cite[Thm. 3.18]{rr-instability-flag}, the adjoint vector bundle $E \times^{G, \mathrm{Ad}} \mathfrak{g}$ is semistable (and of degree $0$), so it does not admit any homomorphisms from the (semistable) line bundle $\mathcal{O}_C(x)$ of positive degree. This fact, along with the unipotence established in Lemma \ref{lemma: unipotent points}, imply that $K_x$ is trivial.

We also note that the triviality of the Lie algebra of $K_x$ follows from the same argument in positive characteristic as long as $\mathrm{char}(k)$ is larger than the height of the adjoint representation $\mathfrak{g}$, by using \cite[Prop. 6.9]{biswas-holla-hnreduction}.
\end{example}

\begin{example}[Semistable Hitchin pairs on $C$] \label{example: semistable Hitchin pairs}
Let us denote by $\mathfrak{g}$ the Lie algebra of $G$, equipped with its adjoint action by $G$. We fix a line bundle $\mathcal{L}$ on $C$. A ($\mathcal{L}$-)twisted Hitchin $G$-bundle is the data of a pair $(E, s)$, where $E$ is a $G$-bundle on $C$ and $s$ is a section of the vector bundle $\left(E \times^{G, \mathrm{Ad}}\mathfrak{g}\right) \otimes \mathcal{L}$. An automorphism of such a twisted Hitchin $G$-bundle is an automorphism of the underlying $G$-bundle $E$ that is compatible with the section $s$. Hitchin pairs are parametrized by an algebraic stack $\mathfrak{X}$ that is affine and of finite type over $\mathrm{Bun}_{G}(C)$. Indeed, let $T \to \Bun_{G}(C)$ be a scheme-valued point corresponding to a $G$-bundle $E$ on $C_T$. The the fiber over $T$ of the forgetful morphism $\mathfrak{X} \to \mathrm{Bun}_G(C)$ parametrizes global sections of the vector bundle $E \times^{G, \mathrm{Ad}} \mathfrak{g}$. The functor of such global sections is represented by a relatively affine scheme of finite type over $T$ by applying \cite[\href{https://stacks.math.columbia.edu/tag/08K6}{Tag 08K6}]{stacks-project} with $\mathcal{F} = \mathcal{O}_{C_T}$ and $\mathcal{G} = E \times^{G, \mathrm{Ad}} \mathfrak{g}$. 

There is a notion of stability for Hitchin pairs analogous to the one for $G$-bundles. In the case of Hitchin pairs, we only check the character condition on parabolic reductions that are compatible with the section (see \cite[Def. 4.6]{anchouche-biswas-eintein-hermitian} or \cite[Defn. 3.3]{bruzzo-otero} for concrete explanations of what compatibility entails, they can also be viewed as $\Theta$-filtrations in the sense of \cite{halpernleistner2018structure}). It can be shown (for example using GIT in \cite[2.8]{schmitt-decorated}) that when $G$ is connected reductive each connected component of the open semistable locus $\mathcal{M}$ is of finite type and admits an adequate moduli space. Therefore, Theorem \ref{thm: main theorem} can be applied to semistable Hitchin pairs.
\end{example}

\begin{example}[Semistable parabolic $G$-bundles] Fix finitely many $k$-points \\$\{p_1, p_2, \ldots, p_n\}$ of the curve $C$. For each index $1 \leq i \leq n$, let $P_i \subset G$ be a choice of parabolic subgroup. The data $\{p_i\}$, $\{P_i\}$ defines a stack $\mathfrak{X}$ of $G$-bundles with parabolic structure. More precisely, $\mathfrak{X}$ parametrizes tuples $(E, s_1, \ldots, s_n)$, where $E$ is a $G$-bundle on $C$ and $s_i$ is a reduction of structure group of the fiber $E|_{p_i}$ to the parabolic subgroup $P_i$. The forgetful morphism $\mathfrak{X} \to \Bun_{G}(C)$ is representable and proper, since the fibers parametrize parabolic reductions of the $G$-bundle at finitely many points of the curve, and so they are isomorphic \'etale locally on the source to the product of flag varieties $\prod_i G/P_i$. Heinloth and Schmitt \cite{heinloth-schmitt} defined notions of semistability depending on certain admissible parameters associated to the parabolics $P_i$. Such stability condition determines an open substack $\mathcal{M}$ of the moduli stack of $G$-bundles with parabolic structure. Using GIT, it is shown in \cite{heinloth-schmitt} that $\mathcal{M}$ admits an adequate moduli space when $G$ is connected reductive, and so Theorem \ref{thm: main theorem} can be applied in this situation. Note that this theorem applies to any restriction $\mathrm{res}_x$, regardless of whether $x$ coincides with one of the points $p_i$.
\end{example}

\begin{example}[Holomorphic chains] \label{examples: holomorphic pairs}
We can take $G = \prod_i \mathrm{GL}_{n_i}$ to be a finite product of general linear groups. The moduli stack $\mathfrak{X}$ of chains of vector bundles on $C$ such that the $i^{th}$ bundle has rank $n_i$ admits an affine morphism of finite type $\mathfrak{X} \to \prod_i \Bun_{\mathrm{GL}_{n_i}}(C) = \Bun_{G}(C)$. Indeed, for any scheme-valued point $T \to \Bun_{G}(C)$ corresponding to a tuple $(\mathcal{E}_i)_i$ of vector bundles on $C_T$, the $T$-fiber of the forgetful morphism $\mathfrak{X} \to \Bun_{G}(C)$ classifies global sections of the vector bundle $\bigoplus_i \mathcal{H}\mathrm{om}(\mathcal{E}_i, \mathcal{E}_{i+1})$. The functor of such global sections is represented by a relatively affine scheme of finite type over $T$ \cite[\href{https://stacks.math.columbia.edu/tag/08K6}{Tag 08K6}]{stacks-project}. There is a family of stability conditions for this moduli problem considered in \cite{consul-prada-chains}, \cite[2.1]{geometry-hol-chains}. We can let $\mathcal{M}$ be the open substack of semistable chains. The existence of adequate moduli spaces for this stack was proven via GIT in \cite{schmitt-quiver1, schmitt-quiver2}, and therefore our theorem also applies in this context.
\end{example}

\begin{example}[$G$-bundles with a section] \label{example: bundle with a section}
Suppose that $k$ has characteristic $0$, and let $V$ be a linear representation of $G$. The stack of $G$-bundles with a $V$-section parameterizes pairs $(E, s)$, where $E$ is a $G$-bundle on $C$ and $s$ is a section of the vector bundle $E \times^{G} V$. The forgetful morphism to the stack $\Bun_{G}(C)$ is affine and of finite type, by a similar argument as in Examples \ref{example: semistable Hitchin pairs} and \ref{example: bundle with a section} using \cite[\href{https://stacks.math.columbia.edu/tag/08K6}{Tag 08K6}]{stacks-project}. There are natural semistability conditions for this moduli problem, and the moduli space for such pairs has been constructed in by GIT in \cite[2.8]{schmitt-decorated}. Therefore Theorem \ref{thm: main theorem} applies in this case to show that the subgroup of automorphisms of the $G$-bundle $E$ that preserve the section embeds via restriction into the automorphism of any fixed fiber.

Similar considerations apply to the moduli of coherent systems as in \cite{le-potier-coherent, schmitt-coherent-systems} and the related moduli of (generalized) pairs considered in \cite{torsion-freepaper} (in the special case when the underlying variety $X$ is a curve).
\end{example}

\end{section}
\begin{section}{Applications}
For our applications, we will make use of the following notions. We include the definitions for the convenience of the reader.
\begin{defn}
    A quasicompact and quasiseparated morphism $f: \mathfrak{X} \to \mathfrak{Y}$ of algebraic stacks is called cohomologically affine if the pushforward functor $f_*(-): \mathrm{QCoh}(\mathfrak{X}) \to \mathrm{QCoh}(\mathfrak{Y})$ between the abelian categories of quasicoherent sheaves is exact. 
\end{defn}

\begin{defn}
    Let $\mathfrak{X}$ be an algebraic stack. An algebraic space $M$ equipped with a quasicompact and quasiseparated morphism $\varphi: \mathfrak{X} \to M$ is called a good moduli space if
    \begin{enumerate}[(1)]
        \item $\varphi$ is cohomologically affine.
        \item $\varphi_*(\mathcal{O}_{\mathfrak{X}}) = \mathcal{O}_M$.
    \end{enumerate}
\end{defn}
\begin{remark}
    The notion of adequate moduli space is weaker than that of good moduli space. In particular every good moduli space is also an adequate moduli space. Both notions agree in characteristic $0$, but differ in positive and mixed characteristic.
\end{remark}
Our first application is a functorial presentation of stacks of decorated $G$-bundles admitting a good moduli space as global quotient stacks. For this we use the parameter space of bundles framed at a point, as in \cite[\S9]{simpson-repnII}.
\begin{prop} \label{prop: framed moduli spaces}
Suppose that $\mathrm{char}(k) = 0$, and let $\mathcal{M}$ be a quasiseparated finite type algebraic stack admitting a good moduli space $\mathcal{M} \to M$, and equipped with a representable morphism $\varphi: \mathcal{M} \to {\mathrm{Bun}}_{G}(X)$. 
\begin{enumerate}[(1)]
    \item If $X(k) \neq \emptyset$, then there exists an algebraic space $Y \to M$ affine and of finite type over $M$ and an action of $G$ on $Y$ such that $\mathcal{M} \cong Y/G$. Furthermore, the construction of $Y$ is functorial, in the sense that every morphism of stacks $\mathcal{M}_1 \to \mathcal{M}_2$ over $\mathrm{Bun}_{G}(X)$ induces a $G$-equivariant morphism of the corresponding algebraic spaces $Y_1 \to Y_2$.
    \item If the good moduli space $M$ is quasiprojective, then $Y$ is quasiprojective and admits a $G$-linearized ample line bundle. In this case $M$ is the GIT quotient of the linearized $G$-scheme $Y$, in the sense of Mumford.
\end{enumerate} 
\end{prop}
\begin{proof}
\begin{enumerate}[(1)]
\item Choose $x \in X(k)$. Consider the morphism
\[  \Bun_{G}(X) \to BG, \; \; \; E \mapsto E|_x\]
We can pull back the universal $G$-bundle $\mathrm{Spec}(k) \to BG$ to obtain the $G$-bundle $\Bun_{G}(X)^{\mathrm{fr}} \to \Bun_{G}(X)$. By definition $\Bun_{G}(X)^{\mathrm{fr}}$ is the stack classifying $G$-bundles on $X$ along with trivializations of the fiber at $x \in X$. Set $Y$ to be the fiber product $Y = \mathcal{M} \times_{\Bun_{G}(X)}\Bun_{G}(X)^{\mathrm{fr}}$. A point in $Y$ corresponds to pair $(p, \psi)$, where $p$ is a point in $\mathcal{M}$ and $\psi$ is a trivialization of the $x$-fiber $E|_x$ of the corresponding $G$-bundle $E= \varphi(p)$. An automorphism of such a point $(p, \psi)$ in $Y$ amounts to an automorphism of $p$ that preserves the trivialization, and so it restricts to the trivial automorphism of $E|_x$. In other words, the algebraic group of automorphisms of $(p, \psi)$ is the kernel $K_x$ of the restriction morphism $\mathrm{res}_x: \mathrm{Aut}(p) \to \mathrm{Aut}(E)|_x$. Theorem \ref{thm: main theorem} for $\mathrm{char}(k) = 0$ applied to $\varphi: \mathcal{M} \to \Bun_{G}(X)$ implies that $K_x$ is trivial. Therefore every point of $Y$ has trivial automorphism group, and so $Y$ is an algebraic space. By construction $Y \to \mathcal{M}$ is a $G$-bundle, so $Y$ admits an action of $G$ (given by changing the trivialization $\psi$), and we have $\mathcal{M} = Y/G$. The fact that $Y$ is of finite type follows because it is a $G$-bundle over the finite type stack $\mathcal{M}$. Since both $Y \to \mathcal{M}$ and the good moduli space are cohomologically affine, so is their composition $Y \to M$. Therefore, the morphism $Y \to M$ of algebraic spaces is affine \cite[Prop. 3.3]{alper-good-moduli}.

The functoriality of $Y$ follows directly from its construction as $\mathcal{M} \times_{\Bun_{G}(X)}\Bun_{G}(X)^{\mathrm{fr}}$; indeed any morphism of stacks $\mathcal{M}_1 \to \mathcal{M}_2$ over $\Bun_{G}(X)$ induces a $G$-equivariant morphism of fiber products $\mathcal{M}_1 \times_{\Bun_{G}(X)}\Bun_{G}(X)^{\mathrm{fr}} \to \mathcal{M}_2 \times_{\Bun_{G}(X)}\Bun_{G}(X)^{\mathrm{fr}}$.

\item If $M$ admits an ample line bundle $L$, then it pulls back to an $G$-linearized ample line bundle $\widetilde{L}$ on $Y$, because $Y \to M$ is affine \cite[\href{https://stacks.math.columbia.edu/tag/0892}{Tag 0892}]{stacks-project}. By the ampleness of $L$, the scheme $M$ is covered by affine open neighborhoods that are the complements of sections of positive powers of $L$. By pulling back these sections to $Y$ and using that $Y \to M$ is affine, we see that $Y$ is covered by affine neighborhoods that are complements of $G$-invariant sections of powers of $\widetilde{L}$. This means that every point of $Y$ is semistable for the line bundle $\widetilde{L}$. It follows that $M$ is the GIT quotient of the linearized quasiprojective $G$-scheme $Y$ (cf. \cite[Remark 13.7]{alper-good-moduli}).
\end{enumerate} 
\end{proof}

For our second application we will again assume that $\mathrm{char}(k) = 0$. Fix a $k$-scheme $S$ of finite type over $k$, and a smooth projective morphism $C \to S$ such that the fibers are geometrically connected curves. Let $D$ be a relative Cartier divisor $D \hookrightarrow C$ such that for all points $s \in S$ the fiber $D_s \hookrightarrow C_s$ is not empty. We shall use the same notation as in Example \ref{example: semistable Hitchin pairs}. Set $\mathcal{L} = \Omega^1_{C/S}(D)$, which is a line bundle on $C$. There is a relative Hitchin stack $\mathrm{Higgs}^D_{G}(C/S) \to S$ of meromorphic $G$-Higgs bundles that parametrizes $\mathcal{L}$-twisted $G$-Higgs bundles Hitchin pairs $(E, \phi)$ on fibers of $C\to S$. We will denote by $\Bun_{G}(C/S) \to S$ the relative stack of $G$-bundles for the morphism $C \to S$ (cf. Remark \ref{remark: general base}). The natural forgetful morphism
\[ \mathrm{Higgs}^D_{G}(C/S) \to \Bun_{G}(C/S), \; \; \; \; (E, \phi) \mapsto E \]
is affine and of finite type. There is an open substack $\mathrm{Higgs}^D_{G}(C/S)^{\mathrm{ss}} \subset \mathrm{Higgs}^D_{G}(C/S)$ whose points parametrize semistable meromorphic Higgs bundles, in the sense of Example \ref{example: semistable Hitchin pairs}. The open and closed substacks $\mathrm{Higgs}^D_{G}(C/S)^{\mathrm{ss}}_{d} \subset \mathrm{Higgs}^D_{G}(C/S)^{\mathrm{ss}}$ where the underlying $G$-bundle has a fixed topological type $d$ are of finite type over $S$. The substacks $\mathrm{Higgs}^D_{G}(C/S)^{\mathrm{ss}}_{d}$ admit good moduli spaces of finite type over $S$, by the GIT construction of \cite[2.8.1.2]{schmitt-decorated} for a single line bundle $L = \mathcal{L}$ and the adjoint representation $\rho: G \to \text{GL}(\mathfrak{g})$ (we note that the GIT construction in the reference applies similarly in the setting of families). In particular the whole stack $\mathrm{Higgs}^D_{G}(C/S)^{\mathrm{ss}}$ admits a good moduli space locally of finite type over $S$.
\begin{prop} \label{prop: smoothness semistable meromorphic Higgs bundles}
With notation and assumptions as in the paragraph above, the moduli stack $\mathrm{Higgs}^{D}_{G}(C/S)^{\mathrm{ss}}$ of semistable meromorphic $G$-Higgs bundles is smooth over $S$. In particular, the corresponding good moduli space is flat over $S$, and all of its $S$-fibers have klt singularities.
\end{prop}
\begin{proof}
We only need to prove that the moduli stack $\mathrm{Higgs}^{D}_{G}(C/S)^{\mathrm{ss}}$ is smooth; then the claims about the moduli space will follow readily from \cite[Thm. 4.16 (ix)]{alper-good-moduli} and \cite[Thm. 5]{moraga-klt}.

To prove smoothness of $\mathrm{Higgs}^{D}_{G}(C/S)^{\mathrm{ss}}$, we shall show that deformations are unobstructed. Let $F \supset k$ be an algebraically closed field, and let $(E, \phi)$ be an $F$-point of $\mathrm{Higgs}^{D}_{G}(C/S)^{\mathrm{ss}}$, corresponding to a $G$-bundle $E$ on the fiber $C_{F}$ and a section $\phi \in H^0(C_{F}, \mathrm{Ad}(E) \otimes \Omega_{C_{F}/F}^1(D_{F}))$. By \cite[\S2,\S3]{br-hitchin-pairs-deformations}, there is a complex of coherent sheaves on $C_{F}$
\[ \mathcal{C}^{\bullet}_{(E,\phi)} = \left[ \mathrm{Ad}(E) \xrightarrow{\partial} \mathrm{Ad}(E)\otimes\Omega^1_{C_{F}/F}(D_{F})\right]\]
that controls the deformation theory of $(E,\phi)$ relative to $S$. In particular, the obstructions to deforming the pair live in the second hypercohomology group
\[ \mathbb{H}^2(\mathcal{C}^{\bullet}_{(E, \phi)}) = \mathrm{coker}\left( H^1(\mathrm{Ad}(E)) \xrightarrow{H^1(\partial)} H^1(\mathrm{Ad}(E)\otimes\Omega^1_{C_{F}/F}(D_{F})) \right) \]
By the duality explained in \cite[\S4]{br-hitchin-pairs-deformations}, Serre duality induces a canonical identification
\[ \mathbb{H}^2(\mathcal{C}^{\bullet}_{(E, \phi)})^{\vee} \cong \mathrm{ker}\left( H^0(\mathrm{Ad}(E)(-D_{F})) \xrightarrow{H^0(\partial(-D_{F}))} H^1(\mathrm{Ad}(E)\otimes\Omega^1_{C_{F}/F}) \right) \]
On the other hand we have $\mathbb{H}^0(\mathcal{C}^{\bullet}_{(E, \phi)}) \subset H^0(\mathrm{Ad}(E))$ given by
\[ \mathbb{H}^0(\mathcal{C}^{\bullet}_{(E, \phi)}) = \mathrm{ker}\left( H^0(\mathrm{Ad}(E)) \xrightarrow{H^0(\partial)} H^0(\mathrm{Ad}(E)\otimes\Omega^1_{C_{F}/F}(D_{F})) \right) \]
By the duality above we have an identification $\mathbb{H}^2(\mathcal{C}^{\bullet}_{(E, \phi)})^{\vee} \cong \mathbb{H}^0(\mathcal{C}^{\bullet}_{(E, \phi)}) \cap H^0(\mathrm{Ad}(E(-D_{F}))$. In other words, $\mathbb{H}^2(\mathcal{C}^{\bullet}_{(E, \phi)})^{\vee}$ is the subset of those sections in $\mathbb{H}^0(\mathcal{C}^{\bullet}_{(E, \phi)}) \subset H^0(\mathrm{Ad}(E))$ that vanish when restricted to the divisor $D_{F}$. Note that $H^0(\mathrm{Ad}(E))$ is the Lie algebra of the group of automorphisms $\mathrm{Aut}(E)$ of the underlying $G$-bundle $E$. Similarly we have that $\mathbb{H}^0(\mathcal{C}^{\bullet}_{(E, \phi)}) \subset H^0(\mathrm{Ad}(E))$ is the Lie algebra of the subgroup $\mathrm{Aut}(E, \phi) \subset \mathrm{Aut}(E)$ of automorphisms of the pair $(E, \phi)$. An element of $\mathbb{H}^2(\mathcal{C}^{\bullet}_{(E, \phi)})^{\vee}\cong \mathbb{H}^0(\mathcal{C}^{\bullet}_{(E, \phi)}) \cap H^0(\mathrm{Ad}(E(-D_{F}))$ would yield an infinitesimal automorphism $\psi$ in $\mathrm{Aut}(E,\phi)(F[\epsilon]/(\epsilon^2)) \subset \mathrm{Aut}(E)(F[\epsilon]/(\epsilon^2))$ that restricts to the identity over $D_{F}$. For any $F$-point $x \in D_{F}(F)$ in the nonempty divisor $D_F$, it follows that the point $\psi \in \mathrm{Aut}(E,\phi)(F[\epsilon]/(\epsilon^2))$ is in the kernel of
\[ \mathrm{res}_x: \mathrm{Aut}(E, \phi) \to \mathrm{Aut}(E_x)\]
By Theorem \ref{thm: main theorem} (which can be applied in families to each finite type open and closed substack $\mathrm{Higgs}^D_{G}(C/S)^{\mathrm{ss}}_{d}$ by Remark \ref{remark: general base}), the morphism $\mathrm{res}_x$ is a monomorphism, and therefore $\psi = 0$. We conclude that $0 = \mathbb{H}^2(\mathcal{C}^{\bullet}_{(E, \phi)})^{\vee}= \mathbb{H}^2(\mathcal{C}^{\bullet}_{(E, \phi)})$, and hence deformations of $(E,\phi)$ are unobstructed.
\end{proof}
\end{section}
\begin{section}{Gieseker semistable $G$-bundles in higher dimensions}
In this section we explain how to apply a similar degeneration argument as in Theorem \ref{thm: main theorem} in the setting of \cite{rho-sheaves-paper}. Let $X$ be a smooth projective variety over a field $k$ of characteristic $0$, equipped with the choice of a fixed ample polarization $\mathcal{O}(1)$. Let $G$ be a connected reductive group over $k$, and fix a faithful homomorphism $ \rho: G \to \prod_{i=1}^b \mathrm{GL}_{r_i}$ into a product of general linear groups. The article \cite{rho-sheaves-paper} defines a moduli stack $\mathrm{Bun}_{\rho}(X)$ of $\rho$-sheaves. It has an open substack $\mathcal{M} \subset \mathrm{Bun}_{\rho}(X)$ of Gieseker semistable $\rho$-sheaves with respect to the given polarization. In this case $\mathcal{M}$ is shown to admit a good moduli space in the sense of \cite{alper-good-moduli}. Any $G$-bundle on $X$ can be canonically regarded as a $\rho$-sheaf, thus inducing an open immersion $\mathrm{Bun}_{G}(X) \hookrightarrow \mathrm{Bun}_{\rho}(X)$. The $G$-bundles that lie in the open substack $\mathcal{M} \subset \mathrm{Bun}_{\rho}(X)$ are called Gieseker semistable. 

One might wonder whether a similar type of rigidity result holds for the automorphisms of a Gieseker semistable $G$-bundle. In fact the same type of degeneration argument shows the following.
\begin{prop}
Let $E$ be a Gieseker semistable $G$-bundle on $X$. Then, there exists a closed subset $Z \subset X$ of codimension at least $2$ such that for every $k$-point $x$ inside the open complement $X \setminus Z$, the restriction $\mathrm{res}_x: \mathrm{Aut}(E) \to \underline{Aut}(E)|_x$ is an immersion of algebraic groups.
\end{prop}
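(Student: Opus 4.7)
The plan is to adapt the two-step strategy of Theorem \ref{thm: main theorem} — unipotence of points followed by degeneration plus semicontinuity — to the $\rho$-sheaf setting. Since $\mathfrak{X}$ admits a good moduli space, first degenerate $E$ to a closed point $\widetilde{E} \in \mathfrak{X}(k)$, which is polystable and in particular a torsion-free $\rho$-sheaf. Define $Z \subset X$ as the locus where $\widetilde{E}$ fails to be a $G$-bundle; equivalently, where the underlying sheaves (pulled through the faithful representation $\rho: G \hookrightarrow \prod_i \text{GL}_{r_i}$) fail to be locally free. Since torsion-free coherent sheaves on a smooth variety are locally free outside a closed subset of codimension $\geq 2$, $Z$ has codimension at least $2$, and $\widetilde{E}|_{X \setminus Z}$ is a genuine $G$-bundle on the complement.

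For any $x \in X(k) \setminus Z$, construct a test degeneration $\widetilde{p}: U \to \mathfrak{X}$ as in the proof of Theorem \ref{thm: main theorem}: a dense open $V \subset U$ maps to $p = E$ and a closed point $u \in U$ maps to $\widetilde{E}$. Because $x \notin Z$, the fiber $\underline{Aut}(\varphi(\widetilde{p}))|_{x \times u}$ is a reductive group (a form of $G$), and the restriction morphism $\text{res}_x$ of $U$-group schemes is well-defined in a neighborhood of $u$. Let $\widetilde{K}_x$ denote its kernel. By upper semicontinuity of fiber dimension, it suffices to show $\dim \widetilde{K}_x|_u = 0$.

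The unipotence lemma must be reproved at the possibly non-locally-free object $\widetilde{E}$. Any $k$-point $g$ of $\widetilde{K}_x|_u$ restricts to an automorphism of the $G$-bundle $\widetilde{E}|_{X \setminus Z}$, which via $G \to G /\!/ G$ produces a morphism $f: X \setminus Z \to G /\!/ G$. Since $X$ is smooth projective geometrically integral and $Z$ has codimension at least $2$, Hartogs gives $H^0(X \setminus Z, \mathcal{O}) = H^0(X, \mathcal{O}) = k$, so $f$ is constant. Since $x \in X \setminus Z$ and $\text{res}_x(g)$ is the identity, \cite[Cor. 6.7]{steinberg-regular} shows $\text{res}_y(g)$ is unipotent for all $y \in (X \setminus Z)(k)$. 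Using a faithful representation and the associated torsion-free sheaf $\mathcal{V}$ on $X$, the endomorphism $(\psi(g) - \text{Id}_{\mathcal{V}})^n$ of $\mathcal{V}$ vanishes at every point of $X \setminus Z$; since $\mathcal{E}nd(\mathcal{V}) = \mathcal{H}om(\mathcal{V}, \mathcal{V})$ is torsion-free (as $\mathcal{H}om$ into a torsion-free sheaf), the section vanishes globally, and $g$ is unipotent in $\text{Aut}(\widetilde{E})$.

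The conclusion then mirrors Theorem \ref{thm: main theorem}: $\widetilde{K}_x|_u$ is a normal subgroup scheme of the geometrically reductive group $\text{Aut}(\widetilde{E})$, and its smooth reduced neutral component is a normal connected unipotent subgroup, hence trivial by \cite[Thm. 9.7.5]{alper_adequate}. Thus $\widetilde{K}_x|_u$ is finite, semicontinuity gives $\dim K_x = 0$, and finiteness together with unipotence of $k$-points in characteristic $0$ forces $K_x$ trivial; so $\text{res}_x$ is a closed immersion, and in particular an immersion of algebraic groups. The main technical obstacle is controlling what happens over the singular locus of $\widetilde{E}$: the Hartogs extension argument and the torsion-freeness of $\mathcal{E}nd(\mathcal{V})$ are precisely what allow the proof to go through at the expense of excluding the codimension-$2$ set $Z$.
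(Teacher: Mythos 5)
Your proposal is correct and follows essentially the same strategy as the paper: take $Z$ to be the non-locally-free locus of the polystable ($\rho$-sheaf) limit, rerun the unipotence lemma over $X\setminus Z$ using Hartogs and torsion-freeness of the sheaves to propagate $(\psi(g)-\mathrm{Id})^n=0$ from the dense open to all of $X$, and finish by semicontinuity of fiber dimension plus reductivity at the closed point. The one place you diverge is the degeneration itself: the paper invokes \cite[Lemma 3.23]{alper2019existence} to get a $\mathbb{G}_m$-equivariant family over $\mathbb{A}^1_k$, so that equivariance forces the family of sheaves to be locally free over all of $(X\setminus Z)\times\mathbb{A}^1_k$ and $\mathrm{res}_x$ is defined over the whole base; you instead reuse the generic smooth-chart degeneration of Theorem \ref{thm: main theorem} and shrink to a neighborhood of $u$ where the section $x\times U$ lies in the fiberwise locally free locus, which also works (by openness of that locus) but leaves that shrinking step implicit.
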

\begin{proof}
We can base change to the algebraic closure of $k$ in order to assume without loss of generality that $k$ is algebraically closed. By Lemma \ref{lemma: unipotent points}, it suffices to check that the kernel of $\mathrm{res}_x$ has dimension $0$. Using the same argument as in the proof of Theorem \ref{thm: main theorem} for the stack $\mathcal{M} \subset \mathrm{Bun}_{\rho}(X)$, we can degenerate $E$ to a ``Gieseker polystable" $\rho$-sheaf, meaning a closed point $u$ in $\mathcal{M}$. In fact, since $\mathcal{M}$ admits a good moduli space, it is locally reductive, so we can use \cite[Lemma 3.24]{alper2019existence} to assume that the degeneration is a $\mathbb{G}_m$-equivariant family over $\mathbb{A}^1_k$ (i.e. a $\Theta$-degeneration of $E$). This way we obtain a $\rho$-sheaf on $\widetilde{p}: \mathbb{A}^1_k \to \mathcal{M}$ that restricts to $E$ on the open $\mathbb{A}^1_k \setminus 0$ and sends the origin to a closed point of $\mathcal{M}$. Recall from \cite{rho-sheaves-paper} that $\widetilde{p}$ amounts to the data of a pair $(\widetilde{\mathcal{F}}^{\bullet}, \widetilde{\sigma})$, where $\widetilde{\mathcal{F}}^{\bullet}$ is a $\mathbb{G}_m$-equivariant tuple of torsion-free sheaves on $X \times \mathbb{A}^1_k$ and $\widetilde{\sigma}$ is a section of certain relatively affine scheme over $X \times \mathbb{A}^1_k$. Since $X$ is smooth, there exists some closed subset $Z \subset X$ of codimension at least $2$ such that the restriction of the tuple of torsion-free $\mathcal{O}_X$-sheaves $\widetilde{\mathcal{F}}^{\bullet}|_{X \times 0}$ to the open complement $X \setminus Z$ is a tuple of vector bundles. It follows from $\mathbb{G}_m$-equivariance and openness of locally free locus that the restriction $\widetilde{\mathcal{F}}^{\bullet}|_{(X \setminus Z) \times \mathbb{A}^1_k}$ is also a tuple of vector bundles, which can be naturally viewed as a $\prod_{i=1}^b \mathrm{GL}_{r_i}$-bundle on $(X \setminus Z) \times \mathbb{A}^1_k$. The restriction $\widetilde{\sigma}|_{(X \setminus Z) \times \mathbb{A}^1_k}$ amounts to a reduction of structure group of this bundle to a $G$-bundle $\widetilde{E}$ on $X \setminus Z$ \cite[Prop. 2.18]{rho-sheaves-paper}, which agrees with the constant $G$-bundle $E|_{X \setminus Z}$ on the complement of the $0$-fiber. For any $k$-point $x$ in the open $X \setminus Z$, we have a natural restriction morphism of $\mathbb{A}^1_k$-group schemes $\mathrm{res}_x: \mathrm{Aut}(\widetilde{p}) \to \underline{Aut}(\widetilde{E})|_{x \times \mathbb{A}^1_k}$, obtained by first restricting to an automorphism of the $G$-bundle $\widetilde{E}$ on $(X \setminus Z) \times \mathbb{A}^1_k$ and then further restricting to the fiber $\underline{Aut}(\widetilde{E})|_{x \times \mathbb{A}^1_k}$ over the section $x \times \mathbb{A}^1_k$. This morphims recovers the usual $\mathrm{res}_x$ for the $G$-bundle $E$ on every point of $\mathbb{A}^1_k \setminus 0$. We just need to show that the $\mathbb{A}^1_k$-fibers of the kernel $\widetilde{K}_x \subset \mathrm{Aut}(\widetilde{p})$ have dimension $0$. By using semicontinuity of fiber dimension as in the proof of Theorem \ref{thm: main theorem}, it suffices to check this at the fiber over the origin $0 \in \mathbb{A}^1_k$. 

So we consider the $\rho$-sheaf $\widetilde{p}|_{0}$ over the origin, which restricts to a $G$-bundle $\widetilde{E}_0$ on $X \setminus Z$. Since $\widetilde{p}|_{0}$ is a closed point and $\mathcal{M}$ admits a good moduli space, the algebraic group $\mathrm{Aut}(\widetilde{p}|_0)$ is reductive. We claim that the kernel $K_x \subset \mathrm{Aut}(\widetilde{p}|_0)$ of the restriction morphism $\mathrm{res}_x: \mathrm{Aut}(\widetilde{p}|_0) \to \underline{Aut}(\widetilde{E}_0)|_{x}$ is unipotent. This will imply that $K_x$ is trivial, as a normal connected unipotent subgroup of the reductive group $\widetilde{p}|_{0}$ (notice that the characteristic of $k$ is $0$, so $K_x$ is connected \cite[pg. 9, paragraph before Thm. 1.1.8]{conrad-reductive}). Hence the unipotence claim would conclude the proof of the proposition. In order to prove the claim, it suffices to show that every geometric point $g \in K_x(k)$ is unipotent, because unipotence can be checked on geometric points as $\mathrm{char}(k) = 0$ implies that $K_x$ is smooth. By definition the algebraic group $\mathrm{Aut}(\widetilde{p}|_0)$ embeds into the product $\prod_{i=1}^b \mathrm{Aut}(\widetilde{\mathcal{F}}^i|_{0})$ of the automorphisms of the sheaves in the tuple. In turn, this product acts faithfully by termwise multiplication on the tuple of $k$-vector spaces $\left(\mathrm{End}(\widetilde{\mathcal{F}}^i|_{0})\right)_{i=1}^b$. In order to show that $g \in K_x(k)$ is unipotent, it suffices to show that it acts unipotently on each element of the tuple $\left(\mathrm{End}(\widetilde{\mathcal{F}}^i|_{0})\right)_{i=1}^b$. This will be true if we can show that $(g - \mathrm{Id})^n =0$ as a tuple of endomorphisms for some $n\gg 0$. Since the sheaves are torsion-free, it suffices to show $(g - \mathrm{Id})^n =0$ over a dense open subset, such as $X \setminus Z$. In this case we can just restrict $g$ to an automorphism of the $G$-bundle $\widetilde{E}_0$ on $X \setminus Z$ and run the proof of Lemma \ref{lemma: unipotent points} replacing $X$ by $X \setminus Z$. Notice that the only properties of $X$ that we used in the proof of Lemma \ref{lemma: unipotent points} is the fact that $H^0(X, \mathcal{O}_X) = k$ and $X$ is (geometrically) reduced. In this case we also have $H^0(X \setminus Z, \mathcal{O}_{X \setminus Z}) - H^0(X, \mathcal{O}_X) = k$ by Hartogs's lemma \cite[\href{https://stacks.math.columbia.edu/tag/0E9I}{Tag 0E9I}]{stacks-project}, since $X$ is smooth and therefore satisfies Serre's $S_2$ condition. So the proof of Lemma \ref{lemma: unipotent points} applies verbatim over $X \setminus Z$.
\end{proof}

\begin{remark}
Instead of considering the stack $\mathcal{M}$ of Gieseker semistable $\rho$-sheaves, we can take the smaller open substack $\mathfrak{Y} \subset \mathcal{M}$ of slope stable $G$-bundles. This stack admits a good moduli space by \cite{hyeon-moduli-stable}, and so Theorem \ref{thm: main theorem} applies directly. In particular we can take the closed subset $Z$ to be empty if $E$ is a slope stable $G$-bundle on $X$.
\end{remark}
\end{section}

\footnotesize{\bibliography{automorphisms_decorated.bib}}
\bibliographystyle{alpha}
  \textsc{Department of Mathematics, Columbia University,
    3990 Broadway, New York, NY 10027,
USA}\par\nopagebreak
  \textit{E-mail address}, \texttt{af3358@columbia.edu}
\end{document}